\newcommandx{\todoin}[2][1=]{\todo[inline, caption={todo}, #1]{%
    \begin{minipage}{\textwidth-20pt}#2\end{minipage}}}
\newcommandx{\richard}[2][1=]{\todo[linecolor=blue,backgroundcolor=blue!25,bordercolor=blue,#1]{#2}}
\newcommandx{\richardin}[2][1=]{\richard[inline, caption={Richard}, #1]{%
    \begin{minipage}{\textwidth-30pt}#2\end{minipage}}}
\newcommandx{\tomek}[2][1=]{\todo[linecolor=red,backgroundcolor=red!25,bordercolor=red,#1]{#2}}
\newcommandx{\tomekin}[2][1=]{\tomek[inline, caption={Tomasz}, #1]{%
    \begin{minipage}{\textwidth-25pt}#2\end{minipage}}}
\newcommandx{\christian}[2][1=]{\todo[linecolor=red,backgroundcolor=yellow!25,bordercolor=yellow,#1]{#2}}
\newcommandx{\christianin}[2][1=]{\christian[inline, caption={Christian}, #1]{%
    \begin{minipage}{\textwidth-25pt}#2\end{minipage}}}
\DeclareFontFamily{U}{mathx}{\hyphenchar\font45}
\DeclareFontShape{U}{mathx}{m}{n}{
      <5> <6> <7> <8> <9> <10>
      <10.95> <12> <14.4> <17.28> <20.74> <24.88>
      mathx10
      }{}
\newcommand{\nn}[1]{{\vert\kern-0.25ex\vert\kern-0.25ex\vert #1 
    \vert\kern-0.25ex\vert\kern-0.25ex\vert}}
\newtheorem{theorem}{Theorem}
\newtheorem{lemma}[theorem]{Lemma}
\theoremstyle{remark}
\newtheorem*{remark*}{Remark}
\theoremstyle{definition}
\newtheorem{definition}[theorem]{Definition}
\numberwithin{equation}{section}
\newcounter{maintheorem}
\newtheorem{mainth}[maintheorem]{Theorem}
\newtheorem*{mainthprime*}{Theorem A$^\prime$}
\newcounter{smallromans}
\newenvironment{romanenumerate}
{\begin{list}{{\normalfont\textrm{(\roman{smallromans})}}}%
  {\usecounter{smallromans}\setlength{\itemindent}{0cm}%
   \setlength{\leftmargin}{5.5ex}\setlength{\labelwidth}{5.5ex}%
   \setlength{\topsep}{.5ex}\setlength{\partopsep}{.5ex}%
   \setlength{\itemsep}{0.1ex}}}%
{\end{list}}
\newcounter{smallromansdash}
\newcounter{bigromans} 
  {\end{list}}
\begin{document}


\baselineskip=17pt



\title[The existence of UFO implies projectively universal morphisms]{The existence of UFO implies\\ projectively universal morphisms}
\author[M.~Balcerzak]{Marek Balcerzak}
\address[M.~Balcerzak]{Institute of Mathematics, Lodz University of Technology, al. Politechniki~8, 93-590 {\L}\'od\'z, Poland}
\email{marek.balcerzak@p.lodz.pl}
\author[T.~Kania]{Tomasz Kania}
\address[T.~Kania]{Mathematical Institute\\Czech Academy of Sciences\\\v Zitn\'a 25 \\115 67 Praha~1\\Czech Republic  and  Institute of Mathematics and Computer Science\\ Jagiellonian University\\ {\L}ojasiewicza 6, 30-348 Krak\'{o}w, Poland
}
\email{tomasz.marcin.kania@gmail.com, kania@math.cas.cz}
\thanks{The second-named author acknowledges with thanks funding received from NCN project SONATA 15 No. 2019/35/D/\-ST1/\-01734.}

\date{\today}

\begin{abstract}
    Let $\mathcal C$ be a concrete category. We prove that if $\mathcal{C}$ admits a universally free object $\mathsf F$, then there is a projectively universal morphism $u\colon \mathsf F\to \mathsf F$, i.e., a morphism $u$ such that for any $B\in \mathcal{C}$ and $\tau\in {\rm Mor}(B)$ there exists an epimorphism $\pi\in {\rm Mor}(\mathsf F, B)$ such that $\pi \tau = u \pi$. This builds upon and extends various ideas by Darji and Matheron (\emph{Proc.~Am.~Math.~Soc.} 145 (2017)) who  proved such a result for the category of separable Banach spaces with contractive operators as well as certain classes of dynamical systems on compact metric spaces. Specialising from our abstract setting, we conclude that the result applies to various categories of Banach spaces/lattices/algebras, C*-algebras, etc.  \end{abstract}

\subjclass[2010]{18A20, 47B01 (primary), and 08B20, 20E06, 20M05, 06B25 (secondary)} 
\keywords{universal free object, UFO, projective universality, concrete category}

\maketitle
\section{Introduction and the main results}



Darji and Matheron \cite{DarjiMatheron} proved that there exists a bounded linear operator $U\colon \ell_1\to \ell_1$ such that every bounded linear operator $T\colon E\to E$ on a~separable Banach space $E$ is a~factor of $\rho\cdot U$ ($\rho\geqslant \|T\|$), i.e., there is a contractive surjective linear operator $\pi \colon \ell_1\to E$ for which the following diagram commutes:

\begin{figure}[ht]
\centering
    \begin{tikzcd}[sep=huge]
    \ell_1 \arrow[dr,phantom, description, sloped]
      \arrow[d, "\pi"'] \arrow[r, "\rho\cdot U"] & \ell_1 \arrow[d, "\pi"] \\
    E \arrow[r, "T"] & E
    \end{tikzcd}
\end{figure}
The aim of the present note is to show that this is an artefact of a purely homological phenomenon that $\ell_1$ is a free object in the category of Banach spaces with contractive operators as morphisms (this is sometimes referred to as K\"othe's theorem, which says in addition that the space $\ell_1(\Gamma)$ for any set $\Gamma$ are the only free/projective objects in this category); here the forgetful functor $\Lambda$, required to properly interpret freeness, takes a~Banach space $E$ to its unit ball $B_E$ and restricts the operators (morphisms) accordingly. In particular, it is not a coincidence that we may take $\rho = 1$ in this setting and for general, possibly non-contractive maps we have to adjust the map to $\rho^{-1}T$, where $\rho \geqslant \|T\|$.\smallskip

Having recognised that, we prove that such a factorisation is not specific to `countably determined objects' and works in full generality way beyond the realm of Banach spaces. For this, we require to build an appropriate category-theoretic framework (the relevant terminology is explained at the end of this section).\smallskip
\begin{definition}
    We call a free object $\mathsf{F} = F(X)$ in $\mathcal{C}$, where $X\in \mathsf{Set}$, \emph{universally free} (UFO), whenever for every $B\in \mathcal{C}$, there exists an injection $\iota\colon X\to \Lambda(B)$ such that the free extension $\beta \iota \colon \mathsf F \to B$ is an epimorphism. 
\end{definition}
For example, in the category $\mathsf{Grp}_\lambda$ of all groups of cardinality at most $\lambda$, where $\lambda$ is an infinite cardinal, the free group on $\lambda$-many generators is a UFO. Similarly, in the category $\mathsf{Ab}_\lambda$ of Abelian groups of cardinality not exceeding $\lambda$, $\mathbb Z^{(\lambda)}$, the direct sum of $\lambda$-many copies of the group of integers, is a UFO.\smallskip

We openly admit that the notion of a UFO, as defined above, is not properly aligned with the spirit of Category Theory as it tacitly puts cardinality constraints on the objects when interpreted in the category of sets. Perhaps a more familiar notion is the one of a~\emph{projective generator}: when present in a category with coproducts, every object is a~target of an~epimorphism from the coproduct of a certain (possibly infinite) number of copies of the projective generator. This is still related to UFOs, however we want to apply Theorem~\ref{Thm:A} to objects naturally appearing in Analysis that are often grouped by their density or some other cardinal invariant. Consequently, if we were to replace UFOs by requiring suitable generators to exist, we would still have to introduce certain `cardinal scalings' on the underlying categories. Let us now state our first main result.

\begin{mainth}\label{Thm:A}
Let $(\mathcal{C}, \Lambda)$ be a concrete category admitting a UFO $\mathsf F$. Then there exists a~projectively universal morphism $u\in {\rm Mor}(\mathsf F)$, that is, for every $B\in \mathcal C$ and $\tau\in {\rm Mor}(B)$ there exists an epimorphism $\pi\colon \mathsf F \to B$ such that the following diagram commutes:
\begin{figure}[ht]
\centering
    \begin{tikzcd}[sep=huge]
    \mathsf F \arrow[dr,phantom, description, sloped]
      \arrow[d, "\pi"'] \arrow[r, "u"] & \mathsf F \arrow[d, "\pi"] \\
    B \arrow[r, "\tau"] & B.
    \end{tikzcd}
\end{figure}
\end{mainth}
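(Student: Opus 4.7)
The plan is to exploit the defining universal property of $\mathsf{F} = F(X)$: a morphism $\mathsf{F} \to B$ is the same datum as a set map $X \to \Lambda(B)$, and two morphisms out of $\mathsf{F}$ coincide as soon as the underlying set maps do. Both $u$ and, for each pair $(B,\tau)$, the epimorphism $\pi$ will thus be constructed by freely prescribing their values on generators; the commutativity $\tau\circ \pi = \pi\circ u$ will be verified on $X$, while the epimorphism property of $\pi$ will be reduced to that of the UFO extension $\beta\iota$.

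I assume that $X$ is infinite, since otherwise UFO would uniformly bound the cardinalities of all $\Lambda(B)$ and the situation essentially collapses. Fix a bijection $\phi\colon X \to X\times\mathbb{N}$ and write $\phi(x) = (x^\flat, n(x))$. Identifying $X$ silently with its canonical image in $\Lambda(\mathsf F)$ (an injection, as a routine consequence of UFO), let $u \in\mathrm{Mor}(\mathsf{F})$ be the free extension of the shift set map
\[
X \longrightarrow \Lambda(\mathsf{F}),\qquad x \longmapsto \phi^{-1}\bigl(x^\flat,\, n(x)+1\bigr).
\]
Given $B$ and $\tau$, invoke UFO to pick an injection $\iota\colon X\to\Lambda(B)$ with $\beta\iota$ an epimorphism, and let $\pi\colon \mathsf{F}\to B$ be the free extension of the set map $x\mapsto \Lambda(\tau)^{n(x)}\bigl(\iota(x^\flat)\bigr)$. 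By the uniqueness clause of freeness, the desired equation $\tau\circ \pi = \pi\circ u$ reduces to an equality of set maps $X\to\Lambda(B)$, and both associated maps send $x$ to $\Lambda(\tau)^{n(x)+1}\bigl(\iota(x^\flat)\bigr)$, as required.

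The delicate step is to verify that $\pi$ is an \emph{epimorphism} in $\mathcal{C}$, a strictly stronger condition than surjectivity on underlying sets in a general concrete category. The key observation is that $\pi$ contains $\beta\iota$ as its ``$n=0$'' piece. Set $Y = \phi^{-1}\bigl(X\times\{0\}\bigr) \subseteq X$; the inclusion $Y\hookrightarrow X$ induces via the free functor a morphism $i\colon F(Y)\to\mathsf{F}$, and the composite $\pi\circ i$ is nothing but $\beta\iota$ relabelled through the bijection $Y\cong X$ provided by $\phi$. Hence $\pi\circ i$ is an epimorphism, whence so is $\pi$ by the standard categorical fact that $g\circ f$ epi implies $g$ epi. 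I expect this last categorical manoeuvre around the abstract notion of epimorphism to be the main obstacle; once dispatched, the rest of the argument is a categorical distillation of the Darji--Matheron shift construction on $\ell_1$, with $\phi$ playing the role of the standard pairing $\mathbb N \cong \mathbb N \times \mathbb N$.
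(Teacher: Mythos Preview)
Your argument is correct and genuinely different from the paper's. The paper first proves a combinatorial lemma: there is a single injection $\mu\colon X\to X$ which is universal in the sense that \emph{every} injection $\sigma\colon X\to X$ is conjugate to the restriction of $\mu$ to some $\mu$-invariant subset $A\subseteq X$. This is established by a classification of the connected components of the functional digraph of an injection (finite cycles, one-sided rays, two-sided rays) and packing $\lambda$ copies of each into $X$. The universal morphism is then $u=\beta\mu$; given $(B,\tau)$, they enumerate a $\Lambda(\tau)$-invariant ``dense'' set $D$ with infinite repetitions, pull $\Lambda(\tau)$ back to an injection $\sigma$ on $X$, conjugate it into $\mu$, and send the complement $X\setminus A$ to a fixed point of $\Lambda(\tau)$---for which they first argue that one may assume such a fixed point exists. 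Your shift construction on $X\cong X\times\mathbb N$ short-circuits all of this: no digraph lemma, no fixed point, and a cleaner certification that $\pi$ is epi via the factorisation $\pi\circ i=\beta\iota\circ F(\psi)$ and the categorical fact that a left factor of an epimorphism is epi. What the paper's route buys is an intermediate statement of independent interest (the universal injection lemma, extending Darji--Matheron's Fact~3.2 to arbitrary infinite cardinals), whereas your route is the more economical path to Theorem~A itself.
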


Our motivation for considering the problem stems from the desire of extending the result due to Darji and Matheron to naturally appearing categories enriching the category of Banach spaces. Let us then list a number of categories that are of particular of interest to us to which Theorem~\ref{Thm:A} applies.
\begin{mainth}\label{Thm:B}
    Let $\lambda$ be an infinite cardinal. The following categories admit UFOs:
    \begin{romanenumerate}
        \item\label{i1} (involutive/commutative) semigroups/monoids of cardinality at most $\lambda$ with (involutive) semigroup homomorphisms;
        \item\label{i3} groups/Abelian groups of cardinality at most $\lambda$ with group homomorphisms;
        \item\label{i4} (semi)lattices/Boolean algebras of cardinality at most $\lambda$ with lattice homomorphisms;
        \item\label{i5} compact Hausdorff spaces of density at most $\lambda$ with continuous maps;
        \item\label{i6} Banach spaces of density at most $\lambda$ with contractive linear maps;
        \item\label{i7} Banach lattices of density at most $\lambda$ with contractive lattice homomorphisms;
        \item\label{i8} Banach algebras of density at most $\lambda$ with contractive algebra homomorphisms;
        \item\label{i9} Banach *-algebras of density at most $\lambda$ with contractive *-homomorphisms;
        \item\label{i10} $C^*$-algebras of density at most $\lambda$ with *-homomorphisms;
    \end{romanenumerate}
\end{mainth}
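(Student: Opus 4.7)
The plan is that for each category on the list, the argument follows a common three-step scheme. First, we identify the free object $\mathsf{F}=F(X)$ on a set $X$ of cardinality $\lambda$ together with the intended forgetful functor $\Lambda$. Second, we verify that $\mathsf F$ itself belongs to the category, i.e.\ its cardinality or density does not exceed $\lambda$. Third, given any $B\in\mathcal{C}$ we produce an injection $\iota\colon X\to\Lambda(B)$ whose free extension $\beta\iota\colon \mathsf F\to B$ is an epimorphism -- typically because $\iota(X)$ either generates $B$ algebraically or is dense in a generating subset.

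For the purely algebraic cases -- (involutive) semigroups and monoids, groups and Abelian groups, (semi)lattices and Boolean algebras -- the forgetful functor is the underlying-set functor and the relevant free objects are classical: the free (involutive) semigroup or monoid on $X$, the free group on $X$, the direct sum $\mathbb{Z}^{(X)}$, the free (semi)lattice on $X$, and the free Boolean algebra on $X$. An elementary word-counting argument shows that each of these has cardinality exactly $\lambda$ whenever $|X|=\lambda$ is infinite, so each lies in the appropriate subcategory $\mathcal{C}_\lambda$. For $B$ of cardinality at most $\lambda$ we enumerate the underlying set of $B$ by a subset of $X$ and extend to an injection $\iota\colon X\to B$; the free extension $\beta\iota$ is then surjective, hence epimorphic.

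For the topological case of compact Hausdorff spaces of density at most $\lambda$, the free object on $X$ is the Stone--\v Cech compactification $\beta X$ of $X$ regarded as a discrete space. Since $X$ sits densely in $\beta X$ with $|X|=\lambda$, the density of $\beta X$ is $\lambda$. For any compact Hausdorff $B$ of density at most $\lambda$ we pick a dense $D\subseteq B$ with $|D|\leqslant\lambda$ and an injection $\iota\colon X\to B$ whose image contains $D$; the free extension is continuous with dense image, and compactness together with Hausdorffness forces surjectivity. For the analytic cases -- Banach spaces, Banach lattices, Banach algebras, Banach $\ast$-algebras and $C^*$-algebras, each of density at most $\lambda$ -- the forgetful functor sends $B$ to its closed unit ball $B_B$, and the free objects are $\ell_1(X)$ (K\"othe's theorem), the free Banach lattice $\mathrm{FBL}[X]$ of de Pagter--Wickstead, the free Banach algebra and free Banach $\ast$-algebra on $X$, and the universal $C^*$-algebra generated by a family of contractions indexed by $X$. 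In each case the object is the completion of an explicit algebraic object whose elements are bounded in terms of $|X|$, and a standard argument yields density at most $|X|=\lambda$. Given $B$ of density at most $\lambda$ we take a dense $D\subseteq B_B$ with $|D|\leqslant\lambda$ and an injection $\iota\colon X\to B_B$ containing $D$ in its image, so that the free extension is a contractive morphism with norm-dense image, hence an epimorphism in the relevant category.

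The main obstacle will be producing the free objects in the Banach-lattice, Banach-algebra and $C^*$-algebra settings together with the required density estimates: for $\ell_1(X)$ and in the purely algebraic cases these are transparent, but in the other analytic settings they rely on non-trivial constructions from the literature, and one must check carefully that the density of the completion does not blow up past the cardinality of the algebraic core. A subsidiary delicate point is that ``epimorphism'' in these categories is not literal set-theoretic surjectivity; one must verify in each case that having norm-dense (respectively, continuous dense) image is enough to left-cancel parallel morphisms, so that the free extension really deserves its epimorphism label.
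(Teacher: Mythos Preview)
Your proposal is correct and follows essentially the same three-step scheme as the paper: identify the classical free object on a set of size $\lambda$, check it lies in the bounded subcategory, and map a generating/dense subset of any $B$ through it. The paper's proof is likewise a terse compilation of known facts, with the same choices in the algebraic, compact-Hausdorff, Banach-space and Banach-lattice cases (the paper cites $\mathrm{FBL}[\ell_1(\lambda)]$ rather than the de~Pagter--Wickstead $\mathrm{FBL}[X]$, but these coincide).

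The one genuine divergence is item~\eqref{i10}. You keep the unit-ball forgetful functor uniformly across all analytic categories and take the UFO to be the universal $C^*$-algebra generated by $\lambda$ contractions; the paper instead uses the full group $C^*$-algebra $C^*_{\max}(\mathrm{Fr}_\lambda)$ and argues via a dense subgroup of the unitary group, so its implicit forgetful functor is different. Both routes are valid: yours is more uniform with the other Banach-type cases, while the paper's lands on a more widely recognised concrete object. Your explicit attention to what ``epimorphism'' means in each category (dense range suffices for the Banach-type categories, and for $C^*$-algebras dense range forces surjectivity since images of $*$-homomorphisms are closed) is a point the paper leaves entirely tacit.
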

Somewhat surprisingly, in the category of Fr\'echet spaces, free objects are necessarily finite-dimensional (\cite{gejler1978extending}) so there are no UFOs therein.\smallskip

The proof of Theorem~\ref{Thm:B} is, in fact, a compilation of known, yet scattered in the literature, facts that we shall then only outline.
\begin{proof}[Proof of Theorem~\ref{Thm:B}]As for \eqref{i1}--\eqref{i5}; these are standard: let $\lambda$ be an infinite cardinal. The categories in scope comprise algebraic structures and a such they admit free objects, and in our setting, actual UFOs. The UFOs may be sometimes concretely identified: 
\begin{itemize}
    \item for semigroups, monoids, groups: the free semigroup $S_\lambda$, the free monoid $S^\#_\lambda$, and the free group ${\rm Fr}_\lambda$ on $\lambda$ generators (all with concatenation);
    \item for commutative semigroups, monoids, and groups: $\mathbb N^{(\lambda)}$, $\mathbb N_0^{(\lambda)}$, and $\mathbb Z^{(\lambda)}$ (the direct sum of $\lambda$-many copies of the semigroups $\mathbb N_0$, $\mathbb N$, and $\mathbb Z$), respectively;
    \item for (semi)lattices: the set of all finite subsets of a set of cardinality $\lambda$ with the union and intersection as (semi)lattice operations; 
    \item Boolean algebras: the algebra of clopen subsets of the Cantor cube $\{0,1\}^\lambda$.
\end{itemize}
For \eqref{i5}, let $K$ be a compact Hausdroff space of density $\lambda$ and $D\subset K$ be a dense subset of cardinality $\lambda$. The identity map $\iota \colon D\to D$, where in domain we consider the discrete topology is continuous. Consequently, the \v{C}ech--Stone extension $\beta\iota\colon \beta D\to K$ is a continuous surjection. This means that $C(\beta \lambda)$, the \v{C}ech--Stone compactification of the discrete space of cardinality $\lambda$, is the sought UFO.

For \eqref{i6}, that $\ell_1(\lambda)$ is a UFO follows essentially from K\"othe's theorem; see also \cite[Lemma 1.4]{castillo1997three}. In the case of Banach lattices \eqref{i7}, it follows from \cite[Corollary 2.9]{aviles2018free} that the free Banach lattice over the Banach space $\ell_1(\lambda)$, ${\rm FBL}[\ell_1(\lambda)]$, is a UFO for Banach lattices of density at most $\lambda$.

Every (unital) Banach algebra of density $\lambda$ is a quotient of the semigroup convolution algebra $\ell_1(S_\lambda)$ (respectively, $\ell_1(S_\lambda^\#)$), where $S_\lambda$ (respectively, $S_\lambda^\#$) is the free semigroup (monoid) on $\lambda$ generators. In the Abelian case, one may consider $\ell_1(\mathbb N^{(\lambda)})$ and $\ell_1(\mathbb N^{(\lambda)}_0)$, so that the four listed algebras are UFOs in the respective categories. This is explained in more detail in \cite[Lemma 2.8]{horvath2021unital}. In the case of Banach *-algebras, the respective objects are $\ell_1(S_\lambda^*)$ and $\ell_1(S_\lambda^{*\#})$, where $S_\lambda^*$ and $S_\lambda^{*\#}$ are, respectively, the free involutive semigroup and free involutive monoid on $\lambda$ generators.

For \eqref{i10}, the full group $C^*$-algebra $C^*_{\max}({\rm Fr}_\lambda)$ on the free group on $\lambda$ generators is an UFO in the respective categories. This follows from standard fact that the the unitary group of a $C^*$-algebra of density $\leqslant \lambda$ (von Neumann algebra acting on a Hilbert space of density $\leqslant \lambda$) contains a dense subgroup of cardinality $\lambda$ that, as such is a quotient of ${\rm Fr}_\lambda$ that extends to either group algebra; the by maximaility of $C^*_{\max}({\rm Fr}_\lambda)$, the group epimorphism extends to a *-epimorphism on the level of $C^*$-algebras. \end{proof}
The list from Theorem~\ref{Thm:B} is by no means complete; it only reflects the authors' personal preferences. The reader is invited to explore the UFO phenomenon in their concrete categories of interest.
\subsection*{Notation and terminology}In the present paper we follow standard conventions. For a given category $\mathcal{C}$ and objects $B,C\in \mathcal{C}$ we denote by ${\rm Mor}(B,C)$ the class of morphisms from $B$ to $C$; we write ${\rm Mor}(B)$ for ${\rm Mor}(B,B)$.

Let $\mathcal{C}$ be a concrete category, that is, a category with a fixed faithful (\emph{forgetful}) functor $\Lambda\colon \mathcal{C}\to \mathsf{Set}$ to the category of all sets; formally a concrete category is then the pair $(\mathcal{C}, \Lambda)$. An object $F\in \mathcal{C}$ is \emph{free} (on $X\in \mathsf{Set}$), when it comes equipped with a morphism $\eta_X\colon X\to \Lambda(F)$ (in $\mathsf{Set}$) such that for every $B\in \mathcal{C}$ and a function $f\colon X\to \Lambda(B)$ there exists a unique morphism $\beta f\in {\rm Mor}(F, B)$ such that $\Lambda(\beta f)\circ \eta_X = f$. 

\newpage

\section{Proof of Theorem~\ref{Thm:A}}

In order to prove the main result we need to extend \cite[Fact 3.2]{DarjiMatheron} beyond countable sets.

\begin{lemma}\label{lem:fact}
Let $X$ be an infinite set. Then there exists an injective map $\mu\colon X\to X$ such that for any other injective map 
$\sigma\colon X\to X$ there are a set $A\subseteq X$ and a bijection $\pi_A\colon X\to A$ such that 
$\mu[A]\subseteq A$ and $\sigma=\pi_A^{-1}\mu\pi_A$.
\end{lemma}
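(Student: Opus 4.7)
The plan is to exhibit $\mu$ with an orbit structure rich enough that every injective self-map of $X$ can be realised, up to conjugation, by restricting $\mu$ to a forward-invariant subset. The underlying fact is a classical orbit classification: any injective self-map $\sigma\colon X\to X$ partitions its domain into three mutually exclusive types of orbits---finite cycles, one-sided rays (countable orbits containing a unique element outside $\sigma[X]$, naturally indexed by $\mathbb{N}$), and bi-infinite orbits (countable orbits entirely inside $\sigma[X]$, naturally indexed by $\mathbb{Z}$). I would first verify this classification by defining on $X$ the relation $x\sim y$ iff $\sigma^m(x)=\sigma^n(y)$ for some $m,n\geq 0$ (injectivity is what makes this relation transitive), observing that on each equivalence class either $\sigma$ has a periodic point (producing a finite cycle, by injectivity) or $\sigma$ is aperiodic, and in the latter case injectivity forces at most one element of the class to lie outside $\sigma[X]$.

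Next I would construct $\mu$. Set $\kappa := |X|$ and, using $\kappa\cdot\aleph_0=\kappa$, partition $X$ as
\[
X \;=\; \bigsqcup_{\alpha<\kappa}\Bigl(\,\bigsqcup_{n\geq 1} C_n^\alpha \;\sqcup\; B^\alpha \;\sqcup\; R^\alpha\Bigr),
\]
where $|C_n^\alpha|=n$ and $B^\alpha, R^\alpha$ are countably infinite. Declare $\mu$ to act as an $n$-cycle on each $C_n^\alpha$, as the successor map of a fixed bijection with $\mathbb{Z}$ on each $B^\alpha$, and as the successor map of a fixed bijection with $\mathbb{N}$ on each $R^\alpha$. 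The resulting $\mu$ is injective and carries exactly $\kappa$ orbits of each type, so by the classification step it is ``universal'' in the sense that every conceivable $\sigma$-orbit type appears among $\mu$-orbits with plenty of room to spare.

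Finally, given an arbitrary injective $\sigma\colon X\to X$, I would decompose $X$ into $\sigma$-orbits; since each orbit is at most countable and $\kappa$ is infinite, there are at most $\kappa$ orbits of each of the three types. By the abundance of $\mu$-orbits, I can fix, for each type, an injection from the $\sigma$-orbits of that type into the $\mu$-orbits of the same type, together with a dynamics-preserving bijection on each matched pair. Letting $A\subseteq X$ be the union of all $\mu$-orbits arising as images and $\pi_A\colon X\to A$ the union of these bijections, one has $\mu[A]\subseteq A$ automatically (since $A$ is a union of whole $\mu$-orbits) and $\pi_A\sigma=\mu\pi_A$ by construction, whence $\sigma=\pi_A^{-1}\mu\pi_A$.

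The only genuinely delicate step is the orbit classification, which uses injectivity of $\sigma$ crucially---both for transitivity of $\sim$ and for uniqueness of a ray's starting point; the remainder is cardinal bookkeeping facilitated by the fact that $\mu$ was designed to carry $|X|$-many orbits of each type.
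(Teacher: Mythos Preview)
Your proof is correct and follows essentially the same route as the paper's: classify the orbits of an injective self-map into finite cycles, one-sided rays, and bi-infinite $\mathbb{Z}$-orbits, then build $\mu$ as a disjoint union of $|X|$-many copies of each orbit type and match $\sigma$-orbits into $\mu$-orbits. The paper phrases the classification in terms of connected components of the functional digraph $G_\sigma$ rather than the equivalence relation $\sim$, but the content is identical; your inclusion of $1$-cycles (fixed points) is in fact slightly more careful than the paper's stated range $n\geq 2$.
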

\begin{proof}
Let $\lambda = |X|$. For any injective map $\sigma\colon X\to X$, let $G_\sigma$ denote the digraph (directed graph) on $X$ induced by $\sigma$: $\overrightarrow{ij}$ is a directed edge of $G_\sigma$ iff $\sigma(i)=j$. We then have only three possible types of components of $G_\sigma$:
\begin{itemize}
    \item[$1^\circ$] a cycle of length $n\geqslant 2$ for some $n\in\mathbb N$;
    \item[$2^\circ$] $\{i_0,i_1,i_2,\dots\}$ where $i_k$ ($k\in\mathbb N$) are distinct, $i_0\notin\sigma[X]$, and the only directed edges are of the form $\overrightarrow{i_k i_{k+1}}$;
    \item[$3^\circ$] $\{ \dots ,i_{-1},i_0, i_1,\dots\}$ where $i_k$ ($k\in\mathbb N$) are distinct and the only directed edges are of the form $\overrightarrow{i_k i_{k+1}}$.
\end{itemize} 
Clearly, the image $\sigma[X]$ is the union of these components. Note the converse situation: if $G$ is a digraph on $X$, it defines an injection $\sigma\colon X\to X$ such that $G=G_\sigma$ with $\sigma[{X}]$ equal to the union of all components of $G$.

Now, let us construct a kind of universal digraph.
Consider two complementary subsets $X_1$ and $X_2$ of $X$, both of size $\lambda$. Then, let us consider a~partition of $X_1$ into sets of three types: 
\begin{itemize}
    \item $V_{n,\alpha}$ ($n\in\mathbb N$, $n\geqslant 2$, $\alpha<\lambda$) with $|V_{n,\alpha}|=n$;
    \item $Y_{t,\alpha}$ ($t\in X_2$, $\alpha<\lambda$), where 
    $Y_{t,\alpha}=\{x_{0,\alpha}, x_{1,\alpha},x_{2,\alpha},\dots\}$ with distinct elements;
    \item $Z_\alpha$ (for $\alpha<\lambda$), where $Z_\alpha=\{\dots, z_{-1,\alpha}, z_{0,\alpha}, z_{1,\alpha},\dots\}$ with distinct elements.
\end{itemize}
Such a partition exists since $\lambda\cdot\omega=\lambda$.
Each of these sets can be treated as a component of a~digraph $G$ of type $1^\circ$, $2^\circ$, or $3^\circ$. Then $G=G_\mu$ with the respective injective map $\mu\colon X\to X$. 

Now, let $\sigma\colon X\to X$ be an injective map and consider all components
of $G_\sigma$.
Then we find a copy of these components among all components of $G_\mu$. Let all vertices of this copy form a set $A$, and let $\pi_A\colon X\to A$ be the respective correspondence (bijection) realising this copy. Observe that $A$ and $\pi_A$ are as desired.
\end{proof}

We are now ready to prove Theorem~\ref{Thm:A}.
\begin{proof}[Proof of Theorem~\ref{Thm:A}]

Without loss of generality we may suppose that for all objects $B \in \mathcal C$ and morphisms $\psi\in {\rm Mor}(B)$, $\Lambda(\psi)$ has a fixed point $0_\phi$. (This can be achieved by the process of adjoining a new point to each $\Lambda(B)$ ($B\in \mathcal{C}$) and extending the corresponding $\mathsf{Set}$-morphisms accordingly.)

Suppose that $\mathsf F = F(X)$ is a UFO. Consider the injection $ \mu \colon X\to X$ from Lemma~\ref{lem:fact}. Let $\beta \mu \colon \mathsf{F} \to \mathsf{F}$ be the associated morphism. We \emph{claim} that $\beta \mu$ is the sought universal projective morphism.

Let $B\in \mathcal{C}$ and let $\tau \in {\rm Mor}(B)$. Pick $D_0\subseteq \Lambda(B)$ or cardinality $\lambda$ such that any surjection $h\colon X\to D_0$ extends to an epimorphism $\beta h\colon \mathsf F\to B$. Set $D = \bigcup_{n=1}^\infty D_n$, where $D_{n+1} = \Lambda(\tau)[D_n]$ ($n = 1, 2, \ldots$).

Let $\{z_x \colon x\in X\}$ be an $X$-indexed enumeration of $D$ in which every element $z\in D$ appears infinitely often. Consequently, we may find an injection $\sigma\colon X\to X$ such that $(\Lambda\tau) (z_x) = z_{\sigma(x)}$ ($x\in X$)\footnote{This does not force $\Lambda(\tau)$ to be injective on $D$ due to repetitions in the enumeration.}. Let $A$ and $\pi_A$ be chosen as in Lemma~\ref{lem:fact} applied to $D$. Set $\pi(x) = z_{\pi_A^{-1}(x)}$ if $x\in A$ and $\pi(x) = 0_\tau$ otherwise. As $\Lambda(\beta \pi)[\Lambda(\mathsf F)]$ contains $D$, $\beta \pi\colon \mathsf F \to B$ is an epimorphism. We \emph{claim} that $\tau$ is a factor of $\beta \mu$ as witnessed by $\pi$.

By the freeness of $\mathsf F$ with respect to $X$, it suffices to check that $\Lambda(\pi (\beta \mu))(x) = \Lambda (\tau \pi)(x)$ for $x\in X$. For this, observe that 

\[
    \Lambda(\pi (\beta \mu))(x) = \Lambda(\pi) (\Lambda(\beta \mu)(x)) = \Lambda(\pi) (\mu(x)) = \pi(\mu(x)) = \left\{\begin{array}{ll} z_{\pi^{-1}_A(\mu(x))}, & x\in A, \\ 0_\tau, & x\notin A.  \end{array} \right.
\]
On the other hand, if $x\notin A$, then 
\[
    \Lambda(\tau \pi)(x) = \Lambda(\tau)(\pi(x)) = \Lambda(\tau)(0_\tau) = \tau(0_\tau) = 0_\tau. 
\]
When $x\in A$, we have
\[
    \Lambda(\tau \pi)(x) = \Lambda(\tau)(\pi(x)) = \Lambda(\tau)(z_{\pi_A^{-1}(x)}) = z_{\sigma(\pi_A^{-1}(x))} = z_{\pi^{-1}_A(\mu(x))},
\]
where the latter equality follows directly from Lemma~\ref{lem:fact}. Consequently, $\Lambda(\pi (\beta \mu)) = \Lambda (\tau \pi)$ so $\pi (\beta \mu) = \tau \pi$.
\end{proof}

\bibliography{bibliography.bib}
\bibliographystyle{plain}

\end{document}